\newtheorem{theorem}{Theorem}
\newtheorem{lemma}[theorem]{Lemma}
\theoremstyle{definition}
\theoremstyle{remark}
\newtheorem{remark}[theorem]{Remark}
\renewcommand{\epsilon}{\varepsilon}
\newcommand{\N}{\mathbb{N}}
\renewcommand{\phi}{\varphi}
\newcommand{\R}{\mathbb{R}}
\newcommand{\Sph}{\mathbb{S}}
\def\bS{{\mathbb S}}
\def\N{{\mathbb N}}
\def\R{{\mathbb R}}
\def\rd{{\mathrm{d}}}
\def\cF{{\mathcal F}}
\def\cP{{\mathcal P}}
\newcommand{\ii}{\infty}
\renewcommand{\epsilon}{\varepsilon}
\newcommand{\norm}[1]{ \left| \! \left| #1 \right| \! \right| }
\title[Fast Diffusion leads to partial mass concentration]{Fast Diffusion leads to partial mass concentration in Keller-Segel type stationary solutions}
\author{J. A. Carrillo, M. G. Delgadino, R. L. Frank, M. Lewin}
\date{December 13, 2021}
\begin{document}

\maketitle
\begin{abstract}
We show that partial mass concentration can happen for stationary solutions of aggregation-diffusion equations with homogeneous attractive kernels in the fast diffusion range. More precisely, we prove that the free energy admits a radial global minimizer in the set of probability measures which may have part of its mass concentrated in a Dirac delta at a given point. In the case of the quartic interaction potential, we find the exact range of the diffusion exponent where concentration occurs in space dimensions $N\geq6$. We then provide numerical computations which suggest the occurrence of mass concentration in all dimensions $N\geq3$, for homogeneous interaction potentials with higher power.
\end{abstract}

\section{Introduction}
Nonlinear aggregation-diffusion equations of the form
\begin{equation}\label{FP}
\partial_t\rho=\Delta\rho^q+\,\nabla\cdot\left(\rho\,\nabla W\ast\rho\right)\,,
\end{equation}
are ubiquitous in continuous descriptions of populations, with applications in mathematical biology, gravitational collapse and statistical mechanics \cite{JL92,OS97,SC02,Ho03,BDP06,TBL06,SC08,HP09,BCL09,CHVY19,CCY19}. Here, $\rho(t)$ is a time-dependent probability measure over $\R^N$, $q>0$ is the diffusion exponent regulating if the diffusion is slow ($q>1$), linear ($q=1$) or fast ($0<q<1$) for small values of the density, while $W$ is the aggregation kernel describing some attraction between the agents of the population. In this work, $W$ will be taken equal to the homogeneous potential
$W_\lambda(x):=|x|^\lambda/\lambda$ with $\lambda>-N$. A positive $\lambda$ corresponds to a bounded interaction potential at the origin, whereas $-N<\lambda<0$ provide locally integrable, singular interaction potentials. At $\lambda=0$, the convention is to take $W_0(x)=\log|x|$ which, in dimension $N=2$, is the Newtonian potential.

The time-dependent equation (\ref{FP}) is the (formal) gradient flow of a free energy functional~\cite{CMV03,AGS,santambrogio2015optimal} defined for 
probability measures $\mu\in\cP(\R^N)$ by
\begin{align}
\cF[\mu]:=&\displaystyle
\frac{1}{q-1}\int_{\R^N}\mu_{\rm ac}(x)^q\,\rd x+\frac{1}{2\lambda}\,\iint_{\R^N\times\R^N} |x-y|^\lambda \,\rd\mu(x)\,\rd \mu(y)&q\ne 1,\label{freeenergy}
\end{align}
where $\mu_{\rm ac}$ is the absolutely continuous part of $\mu$ with respect to the Lebesgue measure. At $q = 1$, the first term is replaced by Boltzmann’s entropy, which is $\int_{\R^N}\mu_{\rm ac}(x)\log\mu_{\rm ac}(x)\,\rd x$ when the measure $\mu$ is absolutely continuous, and $+\infty$ for measures that are not absolutely continuous. This formal structure, which can be turned rigorous only for some particular values of the parameters \cite{CS18}, plays an essential role in the analysis of the dynamics in (\ref{FP}). In particular, the global minimizers of the free energy functional $\cF$ are the best candidates to be locally stable equilibria for~(\ref{FP}) and the large-time asymptotics of time-dependent solutions. 


Nonlinear aggregation equations of the form (\ref{FP}) show very challenging phenomena, both with regard to their time behavior and the properties of their steady states. They have received lots of attention in the last 20 years, see for instance  \cite{JL92,OS97,SC02,Ho03,BDP06,TBL06,SC08,HP09,BCL09,CHVY19,CCY19,Pe1,Pe2} and the references therein. The form of the global minimizers depending on the parameters is quite rich, but by now it is well understood in the case of porous medium-like diffusion $q>1$ and linear diffusion $q=1$, see \cite{AucBea-71,LieOxf-80,Lions-81b,Lions84,JL92,DP04,BDP06,BCM08,BCC12,CD14,CCV15,GM18,CHMV18,CHVY19} for some references. When $q\geq 1$, for all possible values of $\lambda$ where global minimizers of the free energy exist, they are given by bounded probability densities.

We mention specifically the case of the classical parabolic--elliptic Keller--Segel (KS) system \cite{JL92,Ho03,DP04} for cell movement by chemotaxis, which is obtained by setting $N=2$, $q=1$ and $\lambda=0$.
Depending on the mass of the initial mass, it is known that solution to KS can blow up in finite time in any $L^p(\R^N)$, $p>1$. Several variations of KS that avoid blow up are present in the chemotaxis modelling literature, for a review see \cite{hillen2009user,bellomo2020chemotaxis}. We also mention the flux limited KS, in which the maximal aggregation velocity of the density is bounded, see \cite{perthame2019flux,bellomo2010multiscale}.

This work continues the investigation initiated in \cite{CDDFH19} where J.~Dolbeault, F.~Hoffmann and the first three authors of this paper dealt with the much less understood case of  $0<q<1$ and $\lambda>0$. It was shown in \cite{CDDFH19} that global minimizers of the free energy do not necessarily exist, even when $\cF$ is bounded-below, if we insist on requiring that $\mu$ is absolutely continuous with respect to the Lebesgue measure. However, minimizers in the form of measures exist when $\cF$ is bounded-below. By rearrangement, we can, in fact, restrict the minimization of $\cF$ to radially non-increasing probability distributions $\mu$ modulo translations~\cite{CHVY19}, and those can only have a singularity at the origin in the form of a Dirac delta. For these probability measures $\mu=M\delta_0+\rho$ with $\rho\in L^1(\R^N)$ a non-increasing radial function, the free energy becomes
\begin{align}\label{rhoM}
\mathcal F[\rho,M] \displaystyle= &- \frac{1}{1-q} \int_{\R^N} \rho(x)^q\,dx+ \frac{M}{\lambda} \int_{\R^N} |x|^\lambda\rho(x)\,\rd x \\
&+ \frac1{2\lambda} \iint_{\R^N\times\R^N} \rho(x) |x-y|^\lambda \rho(y)\,\rd x\,\rd y.\nonumber
\end{align}
It was proved in \cite{CDDFH19} that this free energy is bounded from below if and only if  $N/(N+\lambda)<q<1$, and in this case admits at least one global minimizer $\mu_*=M_*\delta_0+\rho_*$ with 
\begin{equation}
\rho_*\in L^q(\R^N)\cap L^1(\R^N)\cap L^1(\R^N,|x|^\lambda\,\rd x). 
 \label{eq:Lp_spaces_rho}
\end{equation}
The function $\rho_*$ is always supported on all of $\R^N$ due to the singularity of the term $\rho^q$ at zero, see \cite[Lemma 9]{CDDFH19}, and, if $M_*>0$, it diverges at the origin. We note parenthetically that the minimization of the free energy $\mathcal F$ is equivalent to finding the sharp constant in a reversed Hardy--Littlewood--Sobolev inequality.

Two important questions are whether these minimizers are unique modulo translations, and whether mass concentration occurs in the sense that $M_*>0$. These questions have been solved in~\cite{CDDFH19} for some values of the parameters, but only negatively concerning the occurrence of concentration. For instance, it was shown in~\cite{CDDFH19} that minimizers are always unique for $2\leq\lambda\leq4$ and that $M_*=0$ for 
\begin{itemize}
 \item all $\lambda>0$ and $\frac{N}{N+\lambda}<q<1$ if $N=1,2$,
 \item all $0<\lambda\leq 2+\frac{4}{N-2}$ and $\frac{N}{N+\lambda}<q<1$ if $N\geq3$.
\end{itemize}
Therefore, concentration never happens in dimensions $N=1,2$, nor in higher dimensions for too small values of the aggregation parameter $\lambda$. For example, concentration never happens at $\lambda=2$ in any dimension $N\geq1$. 

We have no intuitive explanation of why concentration cannot happen in dimensions $N=1,2$ but we notice that the same phenomenon occurs for other phase transitions in statistical physics, by the Mermin--Wagner theorem~\cite{MerWag-66,Simon-93}. This is for instance the case of Bose--Einstein condensation~\cite{Thirring}.

Our main goal in this work is to show that \textbf{concentration does indeed occur} in dimensions $N\geq3$, for large-enough values of $\lambda$. First, we look at the quartic case $\lambda=4$ which we can solve completely. We prove in Theorem~\ref{main} below that concentration happens for some (but not all) values of $q$ in dimension $N\ge 6$ but never occurs in dimensions $N\leq5$. However, it does occur in dimensions $N\in\{3,4,5\}$ at smaller values of $q$ where the free energy is unbounded from below, if we allow formal minimizers with an infinite free energy (see Remark~\ref{rmk:min_infinite}). Then, we provide numerical evidence that concentration does happen for regular minimizers at larger values of $\lambda$, starting with dimension $N=3$. 

Our numerical results detailed in Section~\ref{sec:numerics} suggest some interesting features of the model, which we are unfortunately not able to prove at the moment. The numerical evidence indicates that the concentration region satisfies some monotonicity in terms of our three parameters $N$, $\lambda$ and $q$. More precisely, if concentration occurs for some $(N,\lambda,q)$ then it should also occur for all $(N',\lambda',q')$ so that $N'\geq N$, $N'/(N'+\lambda)<q'\leq q$ and $\lambda'\geq\lambda$. Thus, for any fixed  $\lambda>0$ and $N\in\N$, the concentration region for $q$ should be an interval $\big(\frac{N}{N+\lambda},q_N(\lambda)\big)$, with $q_N(\lambda)$ a non-decreasing function. For instance, we think that the interval starts to be non-empty for $\lambda$ slightly above $8$ in dimension $N=3$. Proving these observations seems challenging. 

Our theoretical and numerical results are both based on the associated (first order) Euler-Lagrange equation for the absolutely continuous part $\rho_*$:
\begin{equation}
\frac{q}{1-q}\rho_*^{q-1}= \rho_*\ast|\cdot|^\lambda+M_*|x|^\lambda-\int_{\R^N}|y|^\lambda\rho_*(y)\,\rd y\;+L,
 \label{eq:EL1}
\end{equation}
where $\frac{q}{1-q}\rho_*^{q-1}(0)=L\ge 0$ is a Lagrange multiplier associated with the mass constraint 
\begin{equation}\label{mass}
\int_{\R^N} \rho(x)\,\rd x + M = 1.
\end{equation}
A useful fact will be that~(\ref{eq:EL1}) can be highly simplified when $\lambda$ is an even integer. In this case we can expand $|x-y|^{2n}$ in terms of a polynomial in $x$ and $y$ and, using the radial symmetry of $\rho_*$, the convolution $\rho_*\ast|\cdot|^\lambda$ becomes a simple polynomial in $|x|^2$. Thus~(\ref{eq:EL1}) can be turned into a simpler equation for these finitely many polynomial coefficients. This is how we will be able to solve completely the particular case $\lambda=4$. This simplification will also be used to design a rather precise numerical algorithm in the case $\lambda\in 2\N$, and to deal with the other values of $\lambda$. 

We conclude this introduction with more comments about the meaning of our findings in light of the time-dependent equation~(\ref{FP}). Any minimizer $\mu_*=M_*\delta_0+\rho_*$ for the free energy (\ref{freeenergy}) is a stationary solution of (\ref{FP}), in the sense that 
$$
\Delta\rho_*^q+\,\nabla\cdot\left(\mu_*\,\nabla W_\lambda\ast\mu_*\right)=0
$$
as distributions, due to the properties~(\ref{eq:Lp_spaces_rho}) proved in~\cite{CDDFH19}. We point out that the range $0<q<1$ is usually called ``fast diffusion'' corresponding to the faster diffusion (than the heat equation) for small values of the density while the diffusion is in fact slower for large values of the density, where concentration happens. Therefore, when $q$ gets smaller, the diffusion of a Dirac delta at the origin is weaker. Even more, as proved in \cite{BF83} by Br\'ezis and Friedman, an initial Dirac delta is in fact a kind of stationary solution to the fast diffusion equation for $0<q<\frac{N-2}{N}$. More precisely, they showed that approximating an initial Dirac delta at the origin by mollifiers and sending the regularization parameter to zero was not leading to a source-type strong $L^1$-solution (Barenblatt-type solution) as opposed to the case  $q>\frac{N-2}{N}$. In fact, they proved that the Dirac delta stays ``stable'' for all times \cite[Thm.~8]{BF83}. It is an open problem to give sense to the evolution problem~(\ref{FP}) with general probability measures as initial datum. The Wasserstein gradient flow of the suitably defined extension of the free energy (\ref{freeenergy}) to measures as in \cite[Sect.~5]{CDDFH19} is the natural candidate.

Our results show that the combined effect of the small diffusion at large densities with the attraction due to the potential $W_\lambda$ can lead to a partial concentration of mass at the origin in the stationary case. Understanding the well-posedness and long time behavior of the evolution equation~(\ref{FP}) is essentially open in this regime of parameters. The possible concentration of mass happening in finite or infinite time for its evolution is just one of the many challenging open questions. The long time behavior has recently been investigated in the preprint~\cite{CaoLi-20}, but for $q$ so close to 1 that no concentration can happen.

\medskip

The paper is organized as follows. The next section is devoted to the special case $\lambda=4$ whereas in Section~\ref{sec:numerics} we present our numerical simulations for other values of $\lambda$, looking first at the special case of even integers and then general values.

\section{The quartic case $\lambda=4$}\label{sec:lambda4}
In the case $\lambda=4$, we are reduced to the regime 
$$
\frac{N}{N+4} < q <\frac{N}{N+2} \,,
$$
since it was already proved in \cite[Prop.~14]{CDDFH19} that concentration does not happen for larger values of $q$ and that the free energy is not bounded below for smaller values. We know from \cite[Prop.~20]{CDDFH19} that there is a minimizer $(\rho_*,M_*)$ and by \cite[Thm.~27]{CDDFH19} that this minimizer is unique up to translations. The question is whether $M_*=0$ or not. We define
$$
\boxed{q_N(4) := \frac{N-2}{N+2}\left(1+\frac{4}{3N}\right)}
$$
which will be proved to be the critical exponent at $\lambda=4$ in the next statement. For all $N\geq 6$, one has
$$
\frac{N}{N+4} < q_N(4) <\frac{N-2}{N}<\frac{N}{N+2},
$$
whereas $q_N(4)<\frac{N}{N+4}$ for $N\leq 5$. Note that $q_N(4)$ is increasing with the dimension $N$. 

\begin{theorem}\label{main}
	Let $\lambda=4$ and $\frac{N}{N+4} < q <\frac{N}{N+2}$. Then $\cF$ in~(\ref{rhoM}) admits a unique minimizer $(\rho_*,M_*)$ with $\rho_*$ a radial function satisfying~(\ref{eq:Lp_spaces_rho}). 
	\begin{enumerate}
		\item[(a)] If $N\leq 5$, then $M_*=0$.
		\item[(b)] If $N\geq 6$ and $q<q_N(4)$, then $\displaystyle M_*=\frac{3}{2} \, \frac{q_N(4)-q}{\frac{N-2}{N}-q} >0$.
		\item[(c)] If $N\geq 6$ and $q\geq q_N(4)$, then $M_*=0$.
	\end{enumerate}
\end{theorem}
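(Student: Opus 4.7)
The plan is to exploit the polynomial structure at $\lambda=4$ to reduce the minimization of $\cF$ in~(\ref{rhoM}) to a strictly convex problem depending on $\rho$ alone. The starting observation is that for any radial $\rho$, expanding $|x-y|^4=\bigl(|x|^2+|y|^2-2x\cdot y\bigr)^2$ and using $\int y\,\rho=0$ together with $\int y_iy_j\,\rho=(M_2/N)\delta_{ij}$ (writing $M_k:=\int|y|^k\rho(y)\,\rd y$) yields the algebraic identity
\begin{equation*}
\frac12\iint \rho(x)\,|x-y|^4\,\rho(y)\,\rd x\,\rd y \;=\; \Bigl(\int\rho\Bigr) M_4 \;+\; \frac{N+2}{N}\,M_2^2.
\end{equation*}
Substituting into~(\ref{rhoM}) and using $M+\int\rho=1$, the term $M\int|x|^4\rho$ combines with $(\int\rho)M_4$ into simply $M_4$, and the free energy collapses to
\begin{equation*}
\cF[\rho,M] \;=\; -\frac{1}{1-q}\int\rho^q \;+\; M_4 \;+\; \frac{N+2}{N}\,M_2^2 \;=:\;\widetilde{\cF}[\rho],
\end{equation*}
which does not involve $M$ explicitly. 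The problem thus becomes: minimize $\widetilde{\cF}[\rho]$ over non-negative radial $\rho$ with $\int\rho\le 1$, and recover $M_*:=1-\int\rho_*$ at the end.

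The next step is to observe that $\widetilde{\cF}$ is strictly convex in $\rho$: $-\int\rho^q/(1-q)$ is strictly convex for $0<q<1$, $M_4$ is linear, and $M_2^2$ is convex as the square of a linear functional. This alone yields uniqueness of the minimizer (recovering \cite[Thm.~27]{CDDFH19} in this special case). Writing the Euler-Lagrange equation with a KKT multiplier $L\ge 0$ for the inequality $\int\rho\le 1$ gives
\begin{equation*}
\rho_*(x) \;=\; \left(\frac{q}{1-q}\right)^{\!\alpha}\!\left(|x|^4 + \tfrac{2(N+2)}{N}\, M_2\,|x|^2 + L\right)^{\!-\alpha}\!,\quad \alpha:=\tfrac{1}{1-q},
\end{equation*}
with complementary slackness: either $M_*>0$ and $L=0$ (constraint inactive), or $M_*=0$ and $L\ge 0$.

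The core of the proof is to solve the case $L=0$ explicitly. Here $\rho_* = K\,|x|^{-2\alpha}(|x|^2+\beta)^{-\alpha}$ with $K=(q/(1-q))^\alpha$ and $\beta:=\frac{2(N+2)}{N}M_2$. Both $\int\rho_*$ and $\int|x|^2\rho_*$ become explicit Beta-function integrals in $\beta$; forming their ratio and using $B(a+1,b-1)=\frac{a}{b-1}B(a,b)$ yields $\frac{M_2}{\int\rho_*} = \beta\,\frac{N-2\alpha}{4\alpha-N-2}$. Inserting the self-consistency $\beta=\frac{2(N+2)}{N}M_2$ cancels $\beta$ and pins down
\begin{equation*}
\int\rho_* \;=\; I_0(q,N)\;:=\;\frac{N(4\alpha-N-2)}{2(N+2)(N-2\alpha)}.
\end{equation*}
Substituting $\alpha=1/(1-q)$ and simplifying, elementary algebra turns $M_*=1-I_0$ into the stated closed form $M_* = \tfrac{3}{2}(q_N(4)-q)/(\tfrac{N-2}{N}-q)$.

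Finally I would verify feasibility. The $L=0$ candidate is admissible (giving $M_*>0$ with $\rho_*\in L^1$) precisely when $q<q_N(4)$: integrability at the origin $\alpha<N/2$, i.e.\ $q<(N-2)/N$, is automatic since $q_N(4)<(N-2)/N$, and $q>N/(N+4)$ is part of the standing hypothesis. The interval $N/(N+4)<q<q_N(4)$ is non-empty iff $q_N(4)>N/(N+4)$, which an elementary check reduces to $(N-2)(3N+4)(N+4)>3N^2(N+2)$ and holds exactly when $N\ge6$. By strict convexity and uniqueness: whenever the interior candidate is feasible, it is \emph{the} minimizer, proving~(b); otherwise the minimizer sits on the boundary $\int\rho_*=1$ with $M_*=0$, giving~(a) for $N\le 5$ and~(c) for $N\ge6$, $q\ge q_N(4)$. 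The main technical obstacle is the Beta-function algebra producing the clean formula for $I_0$; the conceptual crux, which makes everything else explicit, is the initial collapse of~(\ref{rhoM}) to an $M$-free, strictly convex functional of $\rho$ alone, enabled by $\lambda=4$ being an even integer.
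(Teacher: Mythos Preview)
Your proposal is correct and follows essentially the same route as the paper: both exploit the expansion of $|x-y|^4$ in radial coordinates to collapse the free energy to the $M$-free, strictly convex functional (the paper's equation~(\ref{eq:free_energy_lambda_4_radial}) is exactly your $\widetilde{\cF}$), write the Euler--Lagrange equation with multiplier $L\ge 0$, solve the $L=0$ case by Beta-function identities to obtain the explicit mass formula~(\ref{eq:formula_m(0)}), and then invoke convexity to certify that the candidate is the unique minimizer.

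The one genuine organizational difference is that the paper packages the $L$-family into a lemma and proves that $m(L)$ is strictly decreasing (via Cauchy--Schwarz), using this monotonicity to locate the unique $L_*$ with $m(L_*)=1$ in the non-concentrated case. Your KKT/convexity formulation sidesteps this: strict convexity already guarantees a unique constrained minimizer, so once you know the $L=0$ critical point has mass $\ge 1$ (or $+\infty$ when $q\ge (N-2)/N$), the minimizer must lie on the boundary $\int\rho=1$ without any need to track how $m(L)$ varies. This is a mild simplification. What the paper's monotonicity argument buys in return is an explicit mechanism (the map $L\mapsto m(L)$) that generalizes conceptually to $\lambda\neq 4$, where convexity of $\cF$ is unavailable; indeed, the monotonicity of $m(L)$ is precisely the conjecture driving the numerics in Section~\ref{sec:numerics}. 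One small point you leave implicit: the self-consistency equation for $\beta$ (your $\beta=\tfrac{2(N+2)}{N}M_2$) does have a unique positive solution, which follows by the same scaling argument as in~(\ref{eq:formula_B_0}); you should state this, since your ratio trick computes $\int\rho_*$ \emph{assuming} such a $\beta$ exists.
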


In dimensions $N\geq 6$ with $q\leq q_N(4)$, we are able to compute the unique minimizer explicitly. It is given by
\begin{equation}
\label{eq:optimizer}
\rho_*(x) = \left(\frac{q}{1-q}\right)^{\frac1{1-q}}\left(|x|^4 + B_*|x|^2\right)^{-\frac1{1-q}}
\end{equation}
for a constant $B_*>0$ given in the proof of Lemma~\ref{solutions}. Note that $\rho_*$ diverges at the origin, as it should~\cite{CDDFH19}. Note also that the concentrated mass $M_*$ is decreasing with respect to $q$. One could also compute $\mathcal F[\rho_*,M_*]$ explicitly for $q\leq q_N(4)$.
See Remark~\ref{rmk:min_infinite} below for an interpretation of what is happening in dimensions $N\in\{3,4,5\}$. 

\subsection*{Proof of Theorem~\ref{main}}
First, we recall that any minimizer of the form $\mu_*=M_*\delta_0+\rho_*$ solves the nonlinear equation~(\ref{eq:EL1}). Using 
$$|x-y|^4=|x|^4 + |y|^4 + 4(x\cdot y)^2 + 2|x|^2|y|^2 - 4|x|^2 x\cdot y - 4|y|^2 x\cdot y$$
and the fact that $\rho_*$ is radial, we can express the convolution in the form
\begin{align}
 \int_{\R^N} |x-y|^4\rho_*(y)\,\rd y = &\,|x|^4 \int_{\R^N} \rho_*(y)\,\rd y + \int_{\R^N} |y|^4\rho_*(y)\,\rd y\nonumber\\
 &\quad+ \!\left( 2 + \frac{4}{N}\right) |x|^2\! \int_{\R^N} |y|^2 \rho_*(y)\,\rd y \,.
 \label{eq:convolution_radial}
\end{align}
Thus, using the mass constraint~(\ref{mass}), the Euler-Lagrange equation~(\ref{eq:EL1}) for $\rho_*$ can be rewritten in the form
\begin{equation}
\begin{cases}
\displaystyle \frac{q}{1-q}\rho_*(x)^{q-1}= |x|^4+B|x|^2+L,\\[3mm]
\displaystyle B=\left( 2 + \frac{4}{N}\right) \int_{\R^N} |y|^2 \rho_*(y)\,\rd y,
\end{cases}
 \label{eq:EL_lambda_4}
\end{equation}
where we recall that $L\geq0$ is an unknown Lagrange multiplier. Our goal is to find the values of $L$ and $B$ in~(\ref{eq:EL_lambda_4}). Of course we then have $M_*=1-\int_{\R^N}\rho_*$.  

The idea of the proof is to look at all the possible solutions of~(\ref{eq:EL_lambda_4}) parametrized by $L$. Plugging the first formula into the second, we obtain a simple nonlinear equation for $B$ which we show admits a \emph{unique solution} $B(L)$ for any $L\geq0$. A scaling argument will also give us an exact expression of $B(0)$. Next, we show that the mass $m(L)$ of this solution is strictly decreasing with~$L$. Thus, we have two possibilities either $m(0)\geq1$ or $m(0)<1$. If $m(0)\geq1$, then there exists a unique $L_*\geq0$ such that $m(L_*)=1$, and the minimizer is given by $(\rho_*,0)$, where $\rho_*$ is the solution of~(\ref{eq:EL_lambda_4}) with this $L_*$. This conclusion follows from the uniqueness of critical points, which is shown by a convexity argument. Alternatively, if $m(0)<1$, then there exist no solution of~(\ref{eq:EL_lambda_4}) with mass $1$ and thus there must be concentration. From~\cite[Prop.~14]{CDDFH19} we then know that $L=0$, that is, $\rho_*$ is given by the unique solution with $L=0$ and $B=B(0)$ and we have $M_*=1-m(0)>0$. 

The following contains the main properties of solutions of~(\ref{eq:EL_lambda_4}) that we need for the proof. 

\begin{lemma}\label{solutions}
	Let $\lambda=4$ and $\max\left(0,\frac{N-2}{N+2}\right)<q<\frac{N}{N+2}$.
	\begin{enumerate}
		\item[(a)] There is a unique differentiable function $B:[0,\infty)\to (0,\infty)$ such that for each $L\geq 0$,  
		$$
		\rho_L(x):=\left(\frac{q}{1-q}\right)^{\frac1{1-q}}\left(|x|^4+B(L)|x|^2+L\right)^{-\frac1{1-q}}
		$$
		satisfies
		$$\left( 2 + \frac{4}{N}\right) \int_{\R^N} |y|^2 \rho_L(y)\,\rd y=B(L).$$
		\item[(b)] The mass $m(L):=\int_{\R^N}\rho_L(x)\,\rd x\in(0,+\infty]$ is continuous and strictly decreasing with respect to $L\geq 0$. It converges to 0 when $L\to\ii$.
		\item[(c)] At $L=0$ we have 
		\begin{equation}
		m(0)=\begin{cases}
		\frac12\, \frac{q-\frac{N-2}{N+2}}{\frac{N-2}N - q} &\text{if $q<\frac{N-2}N$,}\\[3mm]
		+\ii & \text{otherwise.}
		\end{cases}
		 \label{eq:formula_m(0)}
		\end{equation}
		Thus, if $q<\frac{N-2}{N}$, then $m(0) \leq 1$ if and only if $q\leq q_N(4)$. 
	\end{enumerate}
\end{lemma}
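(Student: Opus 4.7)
The plan is to treat the self-consistency equation in~(\ref{eq:EL_lambda_4}) as a one-parameter fixed-point problem in $B$ with parameter $L$, and then exploit a scaling argument to evaluate everything at $L=0$ in closed form. Set $\alpha:=1/(1-q)$, $c:=2+4/N$, $W:=|y|^4+B|y|^2+L$, and define
\[
\phi(B,L):=\int_{\R^N}|y|^2\,W^{-\alpha}\,\rd y,\qquad \psi(B,L):=\int_{\R^N} W^{-\alpha}\,\rd y,
\]
so that, up to the factor $(q/(1-q))^\alpha$, the self-consistency relation reads $B=c\,(q/(1-q))^\alpha\phi(B,L)$ and the mass is $m(L)=(q/(1-q))^\alpha\psi(B(L),L)$. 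Checking integrability of $|y|^{2k}W^{-\alpha}$ near $0$ and near $\infty$ under the standing assumption $(N-2)/(N+2)<q<N/(N+2)$ shows that $\phi$ and $\psi$ are finite and of class $C^1$ on $\{B>0,L\geq 0\}$, with strictly negative partial derivatives in both variables.

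For part~(a), write the self-consistency equation as $G(B,L)=0$ with $G(B,L):=B-c\,(q/(1-q))^\alpha\phi(B,L)$. Since $\partial_B\phi<0$, $G$ is strictly increasing in $B$. At $B=0$ one has $G(0,L)<0$ for $L>0$ by a direct integrability check, while for $L=0$ the rescaling $y=\sqrt{B}z$ (made explicit below) gives $\phi(B,0)\sim B^{1+N/2-2\alpha}$ with a negative exponent in our parameter range, so $G(0^+,0)=-\infty$; as $B\to+\infty$ trivially $G(B,L)\to+\infty$. The intermediate value theorem together with the strict monotonicity produce a unique $B(L)>0$, and the implicit function theorem yields the $C^1$ dependence on $L$.

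For part~(b), implicit differentiation gives $B'(L)=-\partial_L G/\partial_B G<0$ since both partials of $G$ are strictly positive. Consequently $m'(L)=(q/(1-q))^\alpha(\partial_B\psi\cdot B'(L)+\partial_L\psi)$ is \emph{a priori} a sum of terms of opposite signs. The key observation is the symmetry relation $\partial_L\phi=\partial_B\psi=-\alpha\int|y|^2 W^{-\alpha-1}\,\rd y$, after which an elementary rearrangement reduces the numerator of $m'(L)$ to
\[
c\,(q/(1-q))^\alpha\bigl[(\partial_B\psi)^2-\partial_B\phi\cdot\partial_L\psi\bigr]+\partial_L\psi.
\]
Applying the strict Cauchy--Schwarz inequality to the pair $|y|^2 W^{-(\alpha+1)/2}$ and $W^{-(\alpha+1)/2}$ shows the bracketed quantity is strictly negative, and since $\partial_L\psi<0$ we conclude $m'(L)<0$. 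For $m(L)\to 0$ as $L\to\infty$, split the integral at a radius $R$: on $|y|\leq R$ the density is bounded by $(q/(1-q))^\alpha L^{-\alpha}$ and vanishes uniformly; on $|y|\geq R$ we use the pointwise bound $\rho_L(y)\leq(q/(1-q))^\alpha|y|^{-4\alpha}$, which is integrable at infinity because $q>(N-2)/(N+2)$ implies $4\alpha>N$, and whose tail can be made arbitrarily small by choosing $R$ large.

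For part~(c), the substitution $y=\sqrt{B}\,z$ gives $\phi(B,0)=(q/(1-q))^\alpha B^{\,1+N/2-2\alpha}J$ and $\psi(B,0)=(q/(1-q))^\alpha B^{\,N/2-2\alpha}I$, where $I:=\int(|z|^4+|z|^2)^{-\alpha}\,\rd z$ and $J:=\int|z|^2(|z|^4+|z|^2)^{-\alpha}\,\rd z$. Combining with the self-consistency equation yields the pleasant cancellation $(q/(1-q))^\alpha B(0)^{N/2-2\alpha}=1/(cJ)$, so that
\[
m(0)=\frac{I}{cJ},
\]
without any residual dependence on $B(0)$. After the substitution $u=|z|^2$, both $I$ and $J$ reduce to Beta integrals of the form $\int_0^\infty u^{s-1}(1+u)^{-\alpha}\,\rd u=\Gamma(s)\Gamma(\alpha-s)/\Gamma(\alpha)$; the ratio $I/J$ collapses via $\Gamma(x+1)=x\Gamma(x)$ to $(2\alpha-N/2-1)/(N/2-\alpha)$, and inserting $\alpha=1/(1-q)$ together with $c=2(N+2)/N$ gives precisely~(\ref{eq:formula_m(0)}). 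Both integrals converge if and only if $N/2-\alpha>0$, i.e.\ $q<(N-2)/N$; outside that range $I$ already diverges at the origin and $m(0)=+\infty$.

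The main obstacle will be the strict monotonicity in part~(b): the chain-rule derivative of $m(L)$ is a sum of terms with opposite signs, and the argument only closes thanks to the non-obvious symmetry $\partial_L\phi=\partial_B\psi$ coupled with the strict form of the Cauchy--Schwarz inequality.
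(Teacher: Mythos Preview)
Your proof is correct and follows essentially the same approach as the paper: uniqueness of $B(L)$ via monotonicity plus the implicit function theorem, strict monotonicity of $m(L)$ via implicit differentiation combined with the Cauchy--Schwarz inequality $(\int r^2\phi)^2\le\int\phi\int r^4\phi$, and the scaling/Beta-function computation at $L=0$. One harmless slip: in part~(c) you write $\phi(B,0)=(q/(1-q))^\alpha B^{1+N/2-2\alpha}J$ and similarly for $\psi$, but by your own definition $\phi,\psi$ carry no factor $(q/(1-q))^\alpha$; this does not affect the argument since the factor is correctly absorbed in the self-consistency relation and the conclusion $m(0)=I/(cJ)$ is right.
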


\begin{proof}[Proof of Lemma~\ref{solutions}]
\textit{Part (a).} 
Let us define
\begin{equation}
\rho_{B,L}(x)=\left(\frac{q}{1-q}\right)^{\frac1{1-q}}\left(|x|^4+B|x|^2+L\right)^{-\frac1{1-q}} 
 \label{eq:def_rho_B_L}
\end{equation}
for all $B,L\geq 0$ and consider the function $F_L(B)$ given by
\begin{align}
F_L(B):=\left(\frac{q}{1-q}\right)^{\frac1{1-q}}|\bS^{N-1}|\,\int_{0}^\ii \frac{r^{N+1}\rd r}{\left(r^4+Br^2+L\right)^{\frac1{1-q}}}= \int_{\R^N} |y|^2\rho_{B,L}(y)\;\rd y .
 \label{eq:B11a}
\end{align}
This function is well defined for all $L\geq0$ and $B>0$ under our assumptions on $q$. It is strictly decreasing as a function of $B$ and $L$ separately. Moreover, for every given $L\geq 0$ the limit as $B\to 0^+$ is positive (infinite if $L=0$) and the limit $B\to \infty$ is zero.  Therefore, for every $L\geq0$, there exists a unique $B(L)>0$ such that
\begin{align}
B(L)=\kappa\,F_L\big(B(L)\big)\qquad \mbox{with } \kappa:=2+\frac4N .
 \label{eq:B11}
\end{align}
From the monotonicity of $F_L$, we have that $B$ is decreasing with $L$. From the implicit function theorem, $B$ is in fact a smooth function of $L$. Using $r^4+B(L)r^2+L\geq r^4+L$ and scaling out $L$ we find 
\begin{align*}
B(L)&\leq \kappa \left(\frac{q}{1-q}\right)^{\frac1{1-q}}|\bS^{N-1}|\,\int_{0}^\ii \frac{r^{N+1}\rd r}{\left(r^4+L\right)^{\frac1{1-q}}}\\
&=\kappa \left(\frac{q}{1-q}\right)^{\frac1{1-q}}|\bS^{N-1}|L^{\frac{N+2}{4}-\frac1{1-q}}\,\int_{0}^\ii \frac{r^{N+1}\rd r}{\left(r^4+1\right)^{\frac1{1-q}}}.
\end{align*}
Under our assumption on $q$ we have $\frac{N+2}{4}<\frac1{1-q}$ and thus $B(L)\to0$ when $L\to\ii$. We will compute the exact value of $B(0)$ below. 

\smallskip

\noindent \textit{Part (b).} Now we show that the mass $m(L)$ of $\rho_L$ is decreasing in $L$. Similarly as above, we write the integral in radial coordinates to obtain
\begin{equation}
m(L)=\left(\frac{q}{1-q}\right)^{\frac1{1-q}}\!\!\!|\bS^{N-1}|\int_{0}^\ii \!\!\! \frac{r^{N-1}\rd r}{\left(r^4+B(L)r^2+L\right)^{\frac1{1-q}}}.
\label{eq:formula_m_L}
\end{equation}
The integral converges for all $L>0$, but not necessarily for $L=0$. The same estimate as for $B(L)$ provides 
$$m(L)\leq \left(\frac{q}{1-q}\right)^{\frac1{1-q}}|\bS^{N-1}|L^{\frac{N}{4}-\frac1{1-q}}\int_{0}^\ii \frac{r^{N-1}\rd r}{\left(r^4+1\right)^{\frac1{1-q}}}$$
and shows that $m(L)\to0$ when $L\to\ii$. Then, taking a derivative we obtain
$$m'(L)=-\int_{0}^\ii (1+B'(L)r^2)\phi(r)\,\rd r,$$
where we have introduced 
$$\phi(r):=\left(\frac{q}{1-q}\right)^{\frac1{1-q}}\frac{|\bS^{N-1}|}{1-q}r^{d-1}\left(r^4+B(L)r^2+L\right)^{-\frac1{1-q}-1}$$
to simplify the notation. On the other hand, we have from~(\ref{eq:B11})
$$ B'(L)=-\kappa\int_{0}^\ii r^2(1+B'(L)r^2)\phi(r)\,\rd r.$$
Hence
$$B'(L)=-\frac{\kappa\int_{0}^\ii r^2\phi(r)\,\rd r}{1+\kappa\int_{0}^\ii r^4\phi(r)\,\rd r}$$
and thus, 
\begin{align*}
m'(L)=-\int_{0}^\ii \phi(r)\,\rd r+\frac{\kappa\left(\int_{0}^\ii r^2\phi(r)\,\rd r\right)^2}{1+\kappa\int_{0}^\ii r^4\phi(r)\,\rd r}\leq -\frac{\int_{0}^\ii \phi(r)\,\rd r}{1+\kappa\int_{0}^\ii r^4\phi(r)\,\rd r}<0, 
\end{align*}
since 
$$
\left(\int_{0}^\ii r^2\phi(r)\,\rd r\right)^2 \leq
\int_{0}^\ii \phi(r)\,\rd r \int_{0}^\ii r^4\phi(r)\,\rd r
$$
by the Cauchy-Schwarz inequality. This proves our claim that the mass is a decreasing function of $L>0$.
	
\smallskip
	
\noindent \textit{Part (c).} For $L=0$, we notice that the right side of \eqref{eq:B11} is homogeneous in $B(0)$. More explicitly, making the change of variables $r=\sqrt{B(0)}\bar r$, we see that \eqref{eq:B11} can be expressed as
$$
B(0) =\left(2+\frac{4}{N}\right)F_0(B_0)= \left( 2 + \frac{4}{N}\right) c_{N,q}\, B(0)^{\frac{N+2}{2}-\frac{2}{1-q}} \left(\frac{q}{1-q}\right)^{\frac{1}{1-q}},
$$
or equivalently
\begin{equation}
 B(0)^{\frac2{1-q}-\frac{N}2} = \left( 2 + \frac{4}{N}\right) c_{N,q}\left(\frac{q}{1-q}\right)^{\frac{1}{1-q}},
 \label{eq:formula_B_0}
\end{equation}
with
$$
c_{N,q} = |\Sph^{N-1}| \int_0^\infty \frac{r^{N+1}}{(r^4 + r^2)^{\frac{1}{1-q}}}\,\rd r \,,
$$
which is finite under our assumptions on $q$. Next, we turn to the mass. Since $B(L)\to B(0)>0$ as $L\to 0^+$, we have 
$$\lim_{L\to0^+}m(L)=\left(\frac{q}{1-q}\right)^{\frac1{1-q}}|\bS^{N-1}|\int_{0}^\ii \frac{r^{N-1}\rd r}{\left(r^4+B(0)r^2\right)^{\frac1{1-q}}}.$$
This is infinite for $q\geq \frac{N-2}N$ due to the singularity at the origin. For $q<\frac{N-2}N$ we can compute the explicit value of $m(0)$ in~(\ref{eq:formula_m(0)}), using the formula~(\ref{eq:formula_B_0}) of $B(0)$. By scaling we have, this time,
\begin{align*}
m (0)=\int_{\R^N} \rho_0(y)\,\rd y & = c_{N,q}' \,  B(0)^{-\frac{N}{2}+\frac{2}{1-q}}\left(\frac{q}{1-q}\right)^{\frac{1}{1-q}}
\end{align*}
with
$$
c_{N,q}' = |\Sph^{N-1}| \int_0^\infty \frac{r^{N-1}}{(r^4 + r^2)^{\frac{1}{1-q}}}\,\rd r.
$$
Inserting~(\ref{eq:formula_B_0}) into this expression, we obtain
$$
m(0) = \frac{c_{N,q}'}{c_{N,q}} \, \frac{N}{2(N+2)} \,.
$$
With the change of variables $t=r^2$ we can write
\begin{align*}
c_{N,q} &= \frac12 \, |\Sph^{N-1}| \int_0^\infty t^{\frac{N}2-\frac{1}{1-q}} (1+t)^{-\frac{1}{1-q}}\,\rd t
\end{align*}
and
$$
c_{N,q}' = \frac12 \, |\Sph^{N-1}| \int_0^\infty t^{\frac{N-2}2-\frac{1}{1-q}} (1+t)^{-\frac{1}{1-q}}\,\rd t.
$$
Thus, by beta and gamma function identities,
$$
\frac{c_{N,q}'}{c_{N,q}} = \frac{\Gamma(\tfrac{N}{2}-\tfrac{1}{1-q})\,\Gamma(\tfrac{2}{1-q}-\tfrac{N}{2})}{\Gamma(\tfrac{N+2}{2}-\tfrac{1}{1-q})\,\Gamma(\tfrac{2}{1-q}-\tfrac{N+2}{2})}= \frac{N+2}{N}\,\frac{q-\frac{N-2}{N+2}}{\frac{N-2}N - q},
$$
which proves (\ref{eq:formula_m(0)}) and concludes the proof of the Lemma~\ref{solutions}.
\end{proof}

Note that Lemma~\ref{solutions} covers a larger range for the exponent $q$ since $\frac{N-2}{N+2}<\frac{N}{N+4}$, see Remark~\ref{rmk:min_infinite}. However, the free energy $\cF$ is bounded-below only for $q>\frac{N}{N+4}$, which we assume from now on. For the convenience of the reader we provide a self-contained proof of Theorem~\ref{main} using Lemma~\ref{solutions}, which does not use any material from~\cite{CDDFH19} and solely relies on the convexity of $\cF$ in the cone of radial probability measures, in the spirit of Lopes' work \cite{lopes2017uniqueness}.

There are two situations. When $q\geq q_N(4)$, Lemma~\ref{solutions} implies that there is a unique $L_*\geq0$ so that $m(L_*)=1$. In this case we define $(\rho_*,M_*):=(\rho_{L_*},0)$. On the other hand, when $q<q_N(4)$ (which can only happen in dimensions $N\geq6$), then the equation $m(L)=1$ admits no solution. In this case we choose $L_*=0$ and set $(\rho_*,M_*):=(\rho_0,1-\int_{\R^N}\rho_0)$, which satisfies $M_*>0$. The choice $L_*=0$ is dictated by~\cite[Prop.~14]{CDDFH19} but we will see below that this follows from the first order Euler-Lagrange condition on $M_*$, which we have not yet used in the argument.  

We claim that the so-defined $(\rho_*,M_*)$ is the unique minimizer of $\cF$, which will conclude the proof. To prove this claim, we notice that for $\mu=M\delta_0+\rho$ with $\rho$ a radial function, we can use~(\ref{eq:convolution_radial}) to write
\begin{equation}
\cF[\rho,M]=\int_{\R^N}|x|^4\rho(x)\,\rd x+\left(1+\frac{2}{N}\right)\left(\int_{\R^N}|x|^2\rho(x)\,\rd x\right)^2-\frac1{1-q}\int_{\R^N}\rho(x)^q\rd x ,
\label{eq:free_energy_lambda_4_radial}
\end{equation}
From this, it is apparent that $\cF$ is strictly convex in $\rho$. Let us now prove that 
$$
\cF[\rho,M]\geq \cF[\rho_*,M_*]\qquad\mbox{for every   $\;\displaystyle\int_{\R^N}\rho+M=1$},
$$ 
with equality if and only if $(\rho,M)=(\rho_*,M_*)$.
Using~(\ref{eq:free_energy_lambda_4_radial}) and the definition~(\ref{eq:def_rho_B_L}) of $\rho_*$ we obtain after a calculation
\begin{align}
 \cF[\rho,M]-\cF[\rho_*,M_*]=&L_*\int_{\R^N}(\rho_*-\rho)(x)\,\rd x
 +\left(1+\frac{2}{N}\right)\left(\int_{\R^N}|x|^2(\rho-\rho_*)(x)\,\rd x\right)^2\nonumber\\
 &\quad -\frac1{1-q}\int_{\R^N}\left(\rho^q-q\rho_*^{q-1}(\rho-\rho_*)-\rho_*^q\right)(x)\,\rd x.
\label{eq:diff_free_energy}
\end{align}
Let us distinguish cases here. If $M_*=0$, then $\int_{\R^N}\rho_*=1$ and the first term on the right side is certainly non-negative. If $M_*>0$, then the integral can have either sign, but the term vanishes since we have chosen $L_*=0$. Hence in both cases, the first term is non-negative. This is how the two conditions $L_*=0$ for $\int_{\R^N}\rho_*<1$ and $\int_{\R^N}\rho_*=1$ for $L_*>0$ appear to be necessary for minimizers. The other solutions $(\rho_L,1-\int_{\R^N}\rho_L)$ cannot be minimizers. On the other hand, by concavity of $a\mapsto a^q$ on $\R_+$ we have $a^q-qb^{q-1}(a-b)-b^q\leq0$ with equality if and only if $a=b$. Thus we have proved, as we claimed, that $\cF[\rho,M]\geq\cF[\rho_*,M_*]$ with equality if and only if $\rho=\rho_*$. This implies $M=M_*$ due to the mass constraint and concludes the proof of Theorem~\ref{main}.\qed

\begin{remark}[Concentration for $3\leq N\leq 5$]\label{rmk:min_infinite}
For $\frac{N-2}{N+2}<q\leq\frac{N}{N+4}$ we can still define $(\rho_*,M_*)$ by the same procedure. The right side of~(\ref{eq:diff_free_energy}) makes sense and is non-negative. Thus $(\rho_*,M_*)$ is a formal minimizer of the free energy, but the corresponding value is infinite: $\cF[\rho_*,M_*]=-\ii$. This is due to the first and third integrals in~(\ref{eq:free_energy_lambda_4_radial}) which diverge at large $x$. With this formal definition of a minimizer, we see that concentration indeed happens at $\lambda=4$ for all $N\geq3$, since $q_N(4)>\frac{N-2}{N+2}$ in this case. This fact can allow one to use relative free energy quantities as a tool to quantify the basin of attraction to equilibria similarly to the case of the Barenblatt solutions in the very fast diffusion range even if their second moment or their mass becomes infinite leading to infinite free energy, see \cite{CLMT02,LM03,BBDGV09}.
\end{remark}
\begin{remark}[Concentration is independent of mass]\label{rmk:independent_mass}
Notice that we have chosen to work with probability measures and with interaction potential $|x|^\lambda/\lambda$. Assuming a different mass normalization $\mu(\R^N)=m$ and an interaction potential $C|x|^\lambda$ with $C>0$, the minimizer $\mu_{*,m,C}$ of the free energy~(\ref{freeenergy}) is given by
$$
\begin{cases}
\mu_{*,m,C}=\gamma_1\mu_{*}(\gamma_2 x)\\
\gamma_1\gamma_2^{-N} = m\\
C\lambda \gamma_1^{3-q}\gamma_2^{-\lambda-2N}=m,
\end{cases}
$$
with $\mu_*$ the minimizer of the free energy~(\ref{freeenergy}) of mass 1. 
\end{remark}

\section{Numerical results for other values of $\lambda$}\label{sec:numerics}

For $\lambda=4$, we have shown in Lemma~\ref{solutions} that the mass $m(L)$ of the unique solution $\rho_L$ to the simplified Euler-Lagrange equation~(\ref{eq:EL_lambda_4}) is monotone decreasing with respect to the parameter $L$. Therefore, when $m(0)<1$ we can conclude that concentration occurs. We think that the same strategy applies to other values of $\lambda$. Namely, the idea is to look for a radial solution of the Euler-Lagrange equation~(\ref{eq:EL1}) \emph{imposing} $L=0$, and compute its mass $m(0)$. Should the latter be less than 1, we would have found a stationary state with concentration, which is a good candidate for being a global minimizer. If the mass is monotone with $L$ there exist in fact no solution of the Euler-Lagrange equation with mass one. If on the contrary $m(0)>1$, this proves that a minimizer has to have $L>0$ and thus it cannot display concentration, by~\cite[Prop.~14]{CDDFH19}.

To make this strategy work, we would have to show the existence and uniqueness of solutions to~(\ref{eq:EL1}) for every $L\geq0$ and prove that the corresponding mass $m(L)$ is decreasing. We are not able to show this fact analytically. However, we have numerically investigated this question thoroughly for a large range of $\lambda$. The numerical evidence supports the conjecture that these claims hold. We present here our numerical results, dealing first with the simpler case of even integers. The python scripts and their outputs can be found in the GitHub repository `Concentration'~\cite{CarDelFraLew-GitHub} together with a list of the cases investigated.

\subsection{The case of even integers}\label{sec:even_lmb}
When $\lambda=2n$ is an even integer, we can expand $|x-y|^{2n}$ in terms of a polynomial in $x$ and $y$. Using the radial symmetry of $\rho_*$, we obtain that only the even terms contribute to the convolution $\rho_*\ast|\cdot|^\lambda$ in the Euler-Lagrange equation~(\ref{eq:EL1}): 
\begin{equation}\label{eq:evenintegers}
\begin{array}{rcl}
\displaystyle\frac{q}{1-q}\rho_*^{q-1}(x)&\!=\!&\displaystyle{
\int_{\R^N}|x-y|^\lambda \rho_*(y) \rd y+M_*|x|^\lambda-\int_{\R^N}|y|^\lambda\rho_*(y)\,\rd y\;+L}\\
&\!=\!&\displaystyle|x|^{\lambda}\!+\!\sum_{i=1}^{n-1}c^\lambda_i\left(\int_{\R^N} |y|^{2n-2i}\rho_*(y)\,\rd y\right) \!|x|^{2i}+L,
\end{array}
\end{equation}
where we have used the fact that the coefficient of $|x|^\lambda$ simplifies due to the mass constraint, and where
$c_i^\lambda$ are positive coefficients that depend on $\lambda$ and $N$. The values of $c_i^\lambda$ can be computed explicitly and are provided in the appendix for $\lambda\in\{2,4,6,8,10\}$ for completeness. For instance, we have $c^4_1=2+4/N$, $c^6_1=c^6_2=3+12/N$, etc. Note that the unknown mass $M_*$ has disappeared from the equation but it of course still appears in the mass constraint~(\ref{mass}).

The new form~(\ref{eq:evenintegers}) of our Euler-Lagrange equation implies that we can restrict our attention to densities 
\begin{equation}\label{eq:parametric}
\rho_{\beta,L}(x)=\left(\frac{q}{1-q}\right)^{\frac1{1-q}}\left(|x|^{\lambda}+\sum_{i=1}^{n-1}\beta_i |x|^{2i}+L\right)^{-\frac{1}{1-q}}\!\!\!\!,
\end{equation}
with $\beta_i,L\geq0$. The equation~(\ref{eq:evenintegers}) then reduces to a system of $n-1$ nonlinear equations for the parameters $\beta_i$:
\begin{equation}\label{eq:betas}
\beta_i=c_i^\lambda \int_{\R^N}|y|^{2n-2i}\rho_{\beta,L}(y)\;\rd y=:F_{i,L}(\beta_1,...,\beta_{n-1}),
\end{equation}
with $i=1,\dots,n-1$. The gradient of $F_{i,L}$ equals
$$
\frac{\partial}{\partial \beta_j} F_{i,L}(\beta_1,...,\beta_{n-1})=-\frac{c_i^\lambda}{q} \int_{\R^N}|y|^{2n-2i+2j}\rho_{\beta,L}(y)^{2-q}\,\rd y.
$$
We have implemented in Python an algorithm which solves the nonlinear equations~(\ref{eq:betas}). We used the BFGS algorithm on the auxiliary functional 
$$I(\beta_1,...,\beta_{n-1}):=\sum_{i=1}^{n-1}\big(\beta_i-F_{i,L}(\beta_1,...,\beta_{n-1})\big)^2,$$ 
with the integrals computed by the quadrature method, both from the Python library \emph{SciPy}~\cite{SciPy}.

In Figure~\ref{fig:even_lmb} we provide the result of the computation of the mass $m(0)$ for $\lambda\in\{4,6,8,10\}$ as a function of the parameter
\begin{equation}
\alpha:=\frac{2N-q(2N+\lambda)}{N(1-q)}.
\label{eq:alpha_q}
\end{equation}
This parameter (already used in~\cite{CDDFH19}) is useful since the critical value $q=\frac{N}{N+\lambda}$ simply becomes $\alpha=1$ and thus does not depend on $N$ and $\lambda$. The interval of interest is then $\alpha\in(0,1)$ but, similarly as in Lemma~\ref{solutions}, one can in fact go up to $\alpha=2-\frac\lambda4+\frac\lambda{2N}$ which corresponds to $q=\frac{N-2}{N+2}$. In the case $1\le \alpha$, we know that the free energy is not bounded below, still the constructed candidate is a minimizer in the relative free energy sense, see Remark~\ref{rmk:min_infinite}. From the figure we see that the mass $m(0)$ is smaller than $1$ for some $\alpha<1$ in all dimensions $N\geq4$ for $\lambda\in\{6,8,10\}$. We also recover the results of Theorem~\ref{main} at $\lambda=4$. In dimension $N=3$ the mass barely misses 1 at $\lambda=8$ but concentration happens at $\lambda=10$. In Table~\ref{tab:Q} we provide an approximation of the corresponding critical $q_N(d)$ below which concentration occurs, which clearly illustrates the monotonicity in $N$ and $\lambda$.  

We have also found numerically that the mass $m(L)$ of the solution was always decreasing with $L$ as pointed out earlier. Should this be confirmed, then we would have a clear picture of the situation for $\lambda$ an even integer. The final conclusion would be that concentration occurs for all $N\geq3$ starting at $\lambda=10$ and for all $N\geq4$ for $\lambda\in\{6,8\}$. We have already proved in Theorem~\ref{main} that it happens for all  $N\geq6$ for $\lambda=4$, and it was shown in~\cite{CDDFH19} that there is never concentration for $\lambda=2$. Of course we expect the true critical values of $\lambda$ to be in between these special cases of even integers. For instance, we think that concentration happens in $N=3$ for $\lambda$ just slightly above $\lambda=8$, as is suggested by Figure~\ref{fig:even_lmb}. 

\begin{figure*}[t]
\centering
\begin{tabular}{cc}
$\lambda=4$& $\lambda=6$ \\[-0.1cm]
\includegraphics[width=7cm]{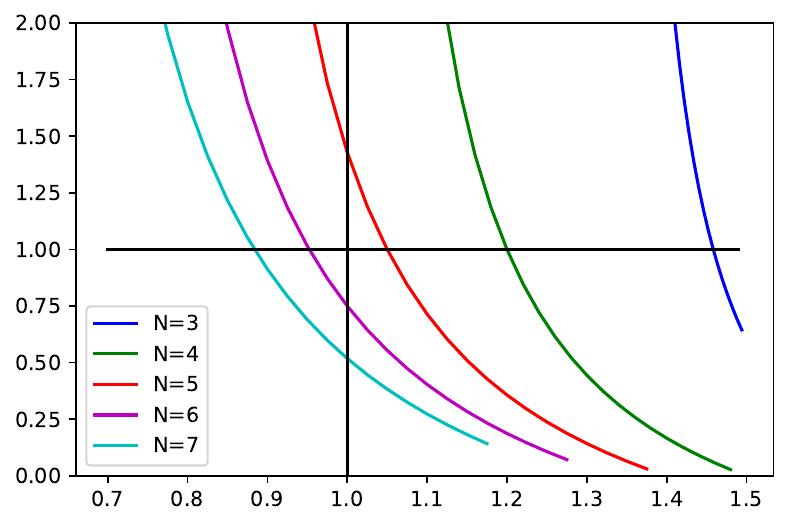}&
\includegraphics[width=7cm]{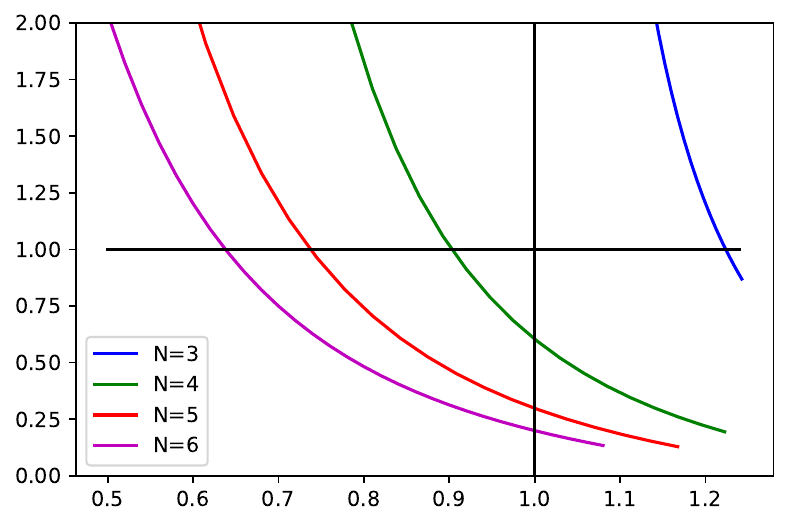}\\[0.2cm]
$\lambda=8$& $\lambda=10$\\[-0.1cm]
\includegraphics[width=7cm]{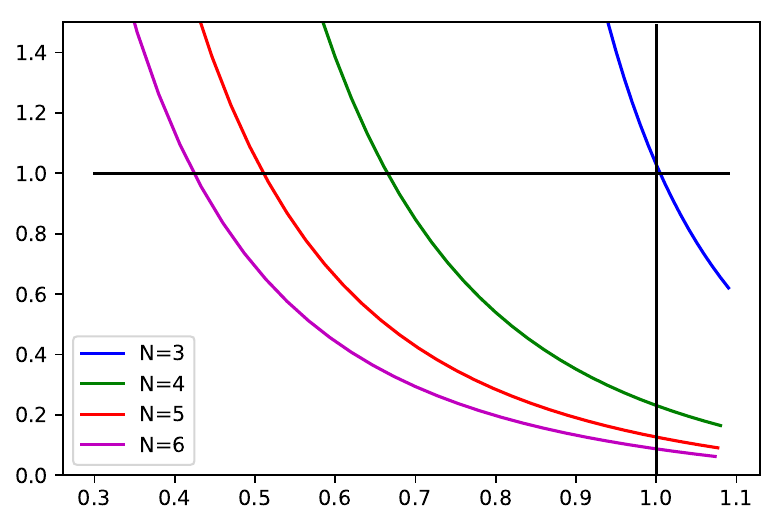}&
\includegraphics[width=7cm]{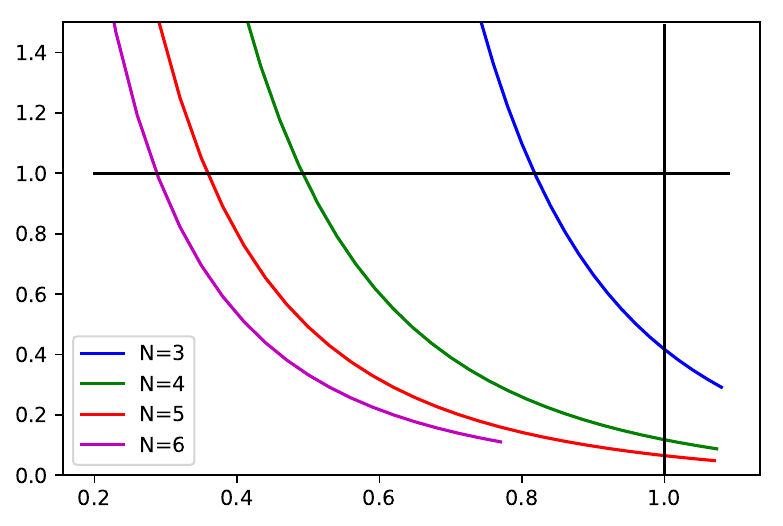}
\end{tabular}
\caption{Mass $m(0)$ of the solution of~(\ref{eq:betas}) found at $L=0$, in terms of the parameter $\alpha$ in~(\ref{eq:alpha_q}), for the indicated values of $\lambda$. \label{fig:even_lmb}}
\end{figure*}

\begin{table}[h]
\centering
\begin{tabular}{c|c|c|c|c}
& $N=3$ & $N=4$ & $N=5$ & $N=6$\\
\hline
$\lambda=4$ & X & X & X & \!\!$\frac{11}{18}\simeq 0.61\;(0.95)$\!\!\!\\
\hline
$\lambda=6$ & X & $0.42\;(0.90)$ & $0.52\;(0.72)$& $0.58\;(0.62)$\\
\hline
$\lambda=8$ & X & $0.40\;(0.66)$& $0.48\;(0.50)$& $0.54\;(0.41)$\\
\hline
$\lambda=10$ & $0.26\;(0.81)$ & $0.38\;(0.49)$& $0.45\;(0.35)$ &$0.51\;(0.29)$ \\
\end{tabular}

\medskip

\caption{Numerical value of the critical diffusion exponent $q_{N}(\lambda)$ below which concentration occurs. The corresponding $\alpha_N(\lambda)$ in~(\ref{eq:alpha_q}) is provided in parenthesis (these are the points at which the curves of Figure~\ref{fig:even_lmb} cross the horizontal axis $m=1$). The value of $q_N(\lambda)$ for $\lambda=4$ and $N=6$ is from Theorem~\ref{main}. An `X' means that concentration does not occur for minimizers of the free energy in this combination of parameters. \label{tab:Q}}
\end{table}

\subsection{The general case}
For $\lambda\notin2\N$, the convolution $\rho\ast|\cdot|^\lambda$ is not a polynomial in $|x|^2$, hence the Euler-Lagrange equation cannot be reduced to a nonlinear equation involving finitely many parameters as in~(\ref{eq:betas}). However, we can still restrict our attention to radial functions and write the convolution in the form
$$\rho\ast|\cdot|^\lambda(x)=|\bS^{N-1}|\int_0^\infty K_{N,\lambda}(|x|,s)\rho(s)s^{d-1}\rd s,$$
with the slight abuse of notation $\rho(|x|):=\rho(x)$, where the kernel $K_{N,\lambda}(r,s)$ is the spherical average
$$K_{N,\lambda}(r,s):=\frac{1}{|\bS^{N-1}|^2}\iint_{\bS^{N-1}\times \bS^{N-1}}|r\omega-s\omega'|^\lambda\,\rd\omega\,\rd\omega'.$$
Its explicit formula in terms of Hypergeometric functions is provided later in the appendix for completeness. Our goal is to solve the nonlinear equation~(\ref{eq:EL1}) at $L=0$. A Taylor expansion as in~\cite[Lemma 18]{CDDFH19} gives 
$$
\rho(r)=\begin{cases}
Cr^{-\frac{2}{1-q}}(1+o(1))& \text{for $r\to0$,}\\
\left(\frac{q}{1-q}\right)^{\frac1{1-q}}r^{-\frac{\lambda}{1-q}}(1+o(1))&\text{for $r\to\ii$}.
\end{cases}
$$
This prompts us to make the ansatz
\begin{equation}
\rho_f(r)^{q-1}= r^2(1+r)^{\lambda-2}f(r),
 \label{eq:def_f}
\end{equation}
where $f$ is a positive continuous function on $\R_+$ tending to a positive limit at 0 and infinity. The Euler-Lagrange equation~(\ref{eq:EL1}) can be expressed in terms of $f$ as
\begin{align}
f(r)=\Phi(f)(r):=&\frac{1-q}{q} \left\{|\bS^{N-1}| \int_0^\ii \frac{C_{N,\lambda,q}(r,s)}{f(s)^{\frac{1}{1-q}}}\rd s\right.\nonumber \\
&\qquad
+\left.\left(1-|\bS^{N-1}|\int_0^\ii\frac{s^{N-1-\frac2{1-q}}}{(1+s)^{\frac{\lambda-2}{1-q}}f(s)^{\frac{1}{1-q}}}\rd s\right)\frac{r^{\lambda-2}}{(1+r)^{\lambda-2}}\right\},
\label{eq:generalEL}
\end{align}
where
$$
C_{N,\lambda,q}(r,s)=\frac{K_{N,\lambda}(r,s)-s^\lambda}{r^2(1+r)^{\lambda-2}}\frac{s^{N-1-\frac{2}{1-q}}}{(1+s)^{\frac{\lambda-2}{1-q}}}.
$$
We used again the mass condition~(\ref{mass}) to remove $M$ from the equation. Our goal is to find a numerical approximation of the solution $f$ to~(\ref{eq:generalEL}). We approximate $f$ by 
\begin{equation}
f(s)=\left|P\left(\frac{r}{1+r}\right)\right|^2
\label{eq:discretization}
\end{equation}
where $P$ is a (complex) polynomial of finite degree $d$. This choice is motivated by the fact that when $\lambda=2n\in2\N$, the exact solution takes exactly this form with $d=n-1$. For $\lambda\notin2\N$ there will probably be no solution to the nonlinear equation $f=\Phi(f)$ in the class~(\ref{eq:discretization}) and we rather aim at minimizing a certain distance between $f$ and $\Phi(f)$. In our algorithm we minimized the $L^2$ (square) distance 
\begin{equation}
 \int_0^\ii\frac{s^{N-1-\frac2{1-q}}}{(1+s)^{\frac{\lambda-2}{1-q}}}\left(f(s)^{-\frac{1}{1-q}}-\Phi(f)(s)^{-\frac{1}{1-q}}\right)^2\rd s
 \label{eq:L2_distance}
\end{equation}
with respect to the coefficients of the polynomial $P$. By Jensen's inequality, the $L^2$ norm in~(\ref{eq:L2_distance}) controls the $L^1$ norm of the corresponding densities in~(\ref{eq:def_f})
\begin{align}
\norm{\rho_f-\rho_{\Phi(f)}}_{L^1(\R^N)}=&  \left(\frac{q}{1-q}\right)^{\frac1{1-q}}|\bS^{N-1}|\\
&\times\int_0^\ii\frac{s^{N-1-\frac2{1-q}}}{(1+s)^{\frac{\lambda-2}{1-q}}}\left|f(s)^{-\frac{1}{1-q}}-\Phi(f)(s)^{-\frac{1}{1-q}}\right|\rd s
 \label{eq:L1_distance}
\end{align}
and it is a smoother function of the polynomial coefficients.

We used again the BFGS method of \emph{SciPy} to minimize~(\ref{eq:L2_distance}) with respect to the coefficients of the polynomial $P$ in~(\ref{eq:discretization}). This provides an approximate solution to~(\ref{eq:generalEL}). We then computed the mass of this solution in terms of $q$ and obtain curves which look very much like those of Figure~\ref{fig:even_lmb}. The point at which the mass crosses 1 provides an approximation of the critical exponent $q_N(\lambda)$ below which condensation happens. 

We provide in Figure~\ref{fig:critical} the result of the calculation of $q_N(\lambda)$ in dimension $N=5$. The curve suggests that concentration starts to appear for $\lambda$ slightly above $4$. This curve was obtained by taking for $P$ a complex polynomial of degree 10 and discretizing the integrals in~(\ref{eq:L2_distance}) and~(\ref{eq:generalEL}) by a simple Riemann sum with $1000$ regularly spaced points on $[0,20]$. With this choice of discretization parameters, the  $L^1$ error~(\ref{eq:L1_distance}) was found to be at most of the order $10^{-5}$ for all the points of the curve.

\begin{figure}
\centering
\includegraphics[width=8.8cm]{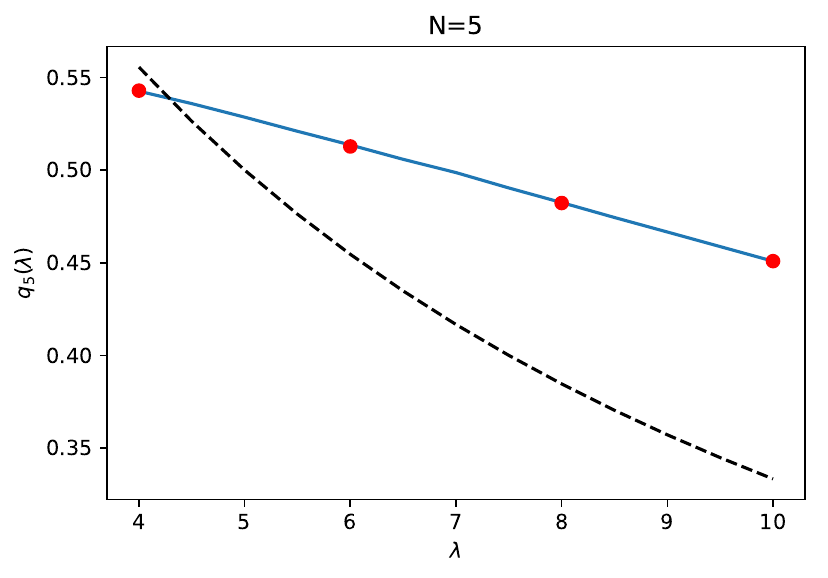}
\caption{Numerical value of $q_5(\lambda)$ below which condensation occurs. The dashed line is the curve $\lambda\mapsto\frac{N}{N+\lambda}$ which is a lower bound to $q$. The dots are the points computed with the algorithm from Section~\ref{sec:even_lmb} when $\lambda$ is an even integer. This suggests that concentration happens in dimension $N=5$ for $\lambda$ slightly above $4$. \label{fig:critical}}
\end{figure}

\section{Discussion}

We have studied free energy minimizers of the nonlinear aggregation-diffusion equation~(\ref{FP}) and found that these can have part of their mass concentrated in a Dirac delta at the origin. This complements the literature on variants of the KS model and their possible blow-up behavior, see \cite{bellomo2020chemotaxis} for recent developments. We rigorously proved this property in the case of the quartic attraction $\lambda=4$ in all space dimensions $N\geq6$ and provided numerical results which suggest the same phenomenon in all space dimensions $N\geq3$ for $\lambda=10$. Our analysis is based on the study of the integral $m(L)$ of (the regular part of) solutions to the Euler-Lagrange equation with Lagrange multiplier $L$, and mass concentration means that $m(0)<1$. Numerical evidence supports that $m(L)$ is decreasing with respect to $L$. It is an interesting problem to show that these candidates are the unique minimizers of the free energy (\ref{freeenergy}) for $\lambda>4$. Unlike the results of \cite{BF83}, which proved that the Dirac delta is an unstable equilibrium for the fast diffusion equation with $0<q<\frac{N-2}{N}$, we expect that the minimizers exhibiting concentration are attractors of the dynamics (\ref{FP}). It is an interesting open problem if an appropriate notion of solution to (\ref{FP}) can develop singularities in finite or infinite time. A result in this direction in the case of an external potential instead of the interaction potential as in (\ref{FP}) has recently been obtained in \cite{CGV21}.

\section*{Appendix: Computation of $K_{N,\lambda}(r,s)$}
Passing to radial coordinates, we obtain 
\begin{align*}
K_{N,\lambda}(r,s)
&=\frac{|\bS^{N-2}|}{|\bS^{N-1}|}\int_{0}^\pi \!\!\left(r^2+s^2-2rs\cos\phi\right)^{\frac\lambda2}\sin^{N-2}(\phi)\,\rd\phi\\
&=\frac{|\bS^{N-2}|}{|\bS^{N-1}|}\int_{-1}^1 \left(r^2+s^2-2rst\right)^{\frac\lambda2}(1-t^2)^{\frac{N-3}{2}}\,\rd t.
\end{align*}
In dimension $N=3$ we have the explicit formula
$$K_{3,\lambda}(r,s)=\frac1{2(\lambda+2)}\frac{(r+s)^{\lambda+2}-|r-s|^{\lambda+2}}{rs}$$
while, in other dimensions, this can be expressed as
$$
K_{N,\lambda}(r,s)=\left(r^2+s^2\right)^{\frac\lambda2}\,_2F_1\left(\frac{2-\lambda}{4},-\frac{\lambda}{4},\frac{N}{2},\frac{4r^2s^2}{(r^2+s^2)^2}\right)
$$
where $_2F_1$ is the Hypergeometric function. In the case of even integers we find
\begin{align*}
K_{N,2}(r,s)=&r^2+s^2,\\
K_{N,4}(r,s)=&r^4+s^4+\left(2+\frac4N\right)r^2s^2,\\
K_{N,6}(r,s)=&r^6 + s^6+ \left(3+\frac{12}N\right) (r^4 s^2 +  r^2 s^4) ,\\
K_{N,8}(r,s)=&r^8+s^8+\frac{32+48/N+4N}{2+N}(r^6 s^2+s^6r^2)\\
&\qquad +\frac{60+144/N+6N}{2+N}r^4 s^4, \\
K_{N,10}(r,s)=&r^{10}+s^{10}+\frac{50+80/N+5N}{2+N}(r^8 s^2+s^8r^2)\\
&\qquad+\frac{140+480/N+10N}{2+N}(r^6 s^4+s^6 r^4). 
\end{align*}
This provides the coefficients $c_i^\lambda$ appearing in~(\ref{eq:betas}).

%
%

\section*{Acknowledgements}

The authors would like to thank Jean Dolbeault, David G\'omez-Castro, Juan Luis V\'az\-quez and an anonymous referee for pointing us out reference \cite{BF83} and fruitful comments. This project has received funding from the European Research Council (ERC) under the European Union's Horizon 2020 research and innovation program (Advanced Grant Nonlocal-CPD 883363 of J.A.C. and Consolidator Grant MDFT 725528 of M.L.). M.G.D. was partially supported by CNPq-Brazil (\#308800/2019-2) and Instituto Serrapilheira. R.L.F. was partially supported by the U.S. National Science Foundation through grants DMS-1363432 and DMS-1954995 and through Germany’s Excellence Strategy EXC-2111-390814868.

\bibliographystyle{abbrv}
\bibliography{CDDFH}

{
  \bigskip
  \footnotesize
  
  J.A.~Carrillo
  
  \textsc{Mathematical Institute, University of Oxford, Oxford OX2 6GG, UK}\par\nopagebreak
  \textit{E-mail address:} \texttt{carrillo@maths.ox.ac.uk}

  \medskip

 M.G.~Delgadino
 
 \textsc{Mathematical Institute, University of Oxford, Oxford OX2 6GG, UK}\par\nopagebreak
  \textit{E-mail address:} \texttt{matias.delgadino@maths.ox.ac.uk}
  
    \textsc{Pontifical Catholic University of Rio de Janeiro, 38097 RJ, Brazil}\par\nopagebreak
  \textit{E-mail address:} \texttt{matias.delgadino@mat.puc-rio.br }(On Leave)

\medskip

  R.L.~Frank
  
  \textsc{Department  of Mathematics, California Institute of Technology, Pasadena, CA 91125, USA}\par\nopagebreak
  \textit{E-mail address:} \texttt{rlfrank@caltech.edu}

  \textsc{Mathematisches Institut, Ludwig-Maximilans Universit\"at M\"unchen, Theresienstr.~39, 80333 M\"unchen, Germany, and Munich Center for Quantum Science and Technology, Schellingstr.~4, 80799 M\"unchen, Germany}\par\nopagebreak
  \textit{E-mail address:} \texttt{r.frank@lmu.de}
  
  \medskip
  
   M.~Lewin
  
  \textsc{CNRS \& CEREMADE, University Paris-Dauphine, PSL University, 75 016 Paris, France}\par\nopagebreak
  \textit{E-mail address:}  \texttt{mathieu.lewin@math.cnrs.fr}

}


\end{document}